\numberwithin{equation}{section} % Equation numbering control.
\newcounter{mnote}
\theoremstyle{plain}
\newtheorem{theorem}{Theorem}[section]
\newtheorem{lemma}[theorem]{Lemma}
\theoremstyle{definition}
\newtheorem{definition}[theorem]{Definition}
\theoremstyle{remark}
\newtheorem{remark}[theorem]{Remark}
\newcommand{\field}[1]{\mathbb{#1}}
\newcommand{\nR}{\field{R}}
\newcommand{\nS}{\field{S}}
\newcommand{\veps}{\varepsilon}
\newcommand{\abs}[1]{\left\lvert#1\right\rvert}
\newcommand{\norm}[1]{\left\lVert#1\right\rVert}
\newcommand{\Lp}[1]{#1}
\newcommand{\B}[3]{\text{${B}^{#1}_{#2, #3}$}}
\begin{document}
 % ---------------------- Article Information ----------------------
\title[Infinity theory]{The space $B^{-1}_{\infty, \infty}$, volumetric sparseness, and 3D NSE}

\date{\today}

% ----------------------  Author Information ----------------------
%
\author{Aseel Farhat}
\address[Aseel Farhat]{Department of Mathematics\\
               University of Virginia\\
       Charlottesville, VA 22904, USA}
\email[Aseel Farhat]{af7py@virginia.edu} 

\author{Zoran Gruji\'c}
\address[Zoran Gruji\'c]{Department of Mathematics\\
               University of Virginia\\
       Charlottesville, VA 22904, USA}
\email[Zoran Grujic]{zg7c@virginia.edu} 

\author{Keith Leitmeyer}
\address[Keith Leitmeyer]{Department of Mathematics\\
               University of Virginia\\
       Charlottesville, VA 22904, USA}
\email[Keith Leitmeyer]{kl2ju@virginia.edu}  
%---------------------------------------------------------------
\begin{abstract}

In the context of the $L^\infty$-theory of the 3D NSE, it is shown that
smallness of a solution in Besov space $B^{-1}_{\infty, \infty}$
suffices to prevent a possible blow-up. In particular, it is revealed that
the aforementioned condition implies a particular
local spatial structure of the regions of intense velocity components, namely,
the structure of local volumetric sparseness on the scale comparable
to the radius of spatial analyticity measured in $L^\infty$.

\end{abstract}

 \maketitle

%--------------------------------------------------------------------
\section{Introduction}\label{intro}
%--------------------------------------------------------------------

\noindent  Motion of 3D incompressible, viscous fluid is modeled by 3D Navier-Stokes
equations (NSE),
\[
 u_t+(u\cdot \nabla)u=-\nabla p + \triangle u,
\]
supplemented with the incompressibility condition $ \, \mbox{div} \,
u = 0$, where $u$ is the velocity of the fluid and $p$ is the
pressure (here, the viscosity is set to $1$, and the external force to $0$).
Henceforth, the spatial domain will be the whole space $\mathbb{R}^3$.

\medskip

A question of whether the 3D NSE allow a formation of singularities is
an open problem; moreover, the problem is \emph{super-critical} in the sense that there is
a fixed `scaling distance' between any presently known regularity criterion and the corresponding
(presently known)
\emph{a priori} bound. A telling example is a highly nontrivial regularity criterion obtained in
\cite{ESS03}, namely, $u \in L^\infty (0,T; L^3)$, to be contrasted to \emph{a priori} boundedness
of the kinetic energy, $u \in L^\infty (0,T; L^2)$, satisfied by any Leray weak solution.
In particular, the regularity criteria are (at best) scaling-invariant with respect to the
unique scaling leaving the equations invariant.

\medskip

A (very) partial hierarchy of the scaling invariant spaces of interest $X$ is as follows,

\[
 L^3 \hookrightarrow L^{3, \infty} \hookrightarrow BMO^{-1} \hookrightarrow B^{-1}_{\infty, \infty}.
\]

\medskip

Looking at the corresponding (existing) regularity criteria in $L^\infty (0,T; X)$, the only criterion
that does not require a smallness condition is the aforementioned result of Escauriaza,
Seregin and Sverak \cite{ESS03} in $L^\infty (0,T; L^3)$. On the other side of the spectrum, since 
$B^{-1}_{\infty, \infty}$ 
is the largest scaling-invariant space in play, obtaining even a smallness regularity criterion in
$L^\infty (0,T; B^{-1}_{\infty, \infty})$ is of significant interest.
A positive answer was given in
\cite{CP02} and \cite{ChSh10}, in the setting of mild solutions belonging to a suitable
$p$-integrable ($p < \infty$) Besov space and
Leray solutions, respectively. 

\medskip

Since $B^{-1}_{\infty, \infty}$ is an
$\infty$-type space, a natural question becomes whether it is possible to derive a smallness 
regularity criterion in
$L^\infty (0,T; B^{-1}_{\infty, \infty})$
without assuming \emph{any global integrability} of solutions, i.e., 
in the context of the $L^\infty$-theory. A class of weak/distributional solutions where this is relevant
is the class of uniformly-local, non-decaying `local Leray solutions' constructed
by Lemari\'e-Rieusset (cf. \cite{LR02}), where a spatial singular set 
at a (possible) singular time $T$ could consists of a sequence of points $\{x_j\}$ running off to infinity. 
More specifically, we could envision a criticality scenario where the singularity build up at each $(x_j, T)$
would feature a locally self-similar blow-up rate (such a solution could be a local Leray solution on a 
time-interval containing $T$). If, in addition, we suppose that $T$ is the first singular time, then the 
solution--up to $T$--could be in $L^\infty$, but not in \emph{any} proper $L^p$.

\medskip

It is worth noting that, compared to the $L^p$-theory,
the $L^\infty$-theory of the 3D NSE, as well as of the Stokes problem, is less established.
A good illustration of this fact is that even such a fundamental question as whether the Stokes semigroup
generates an analytic semigroup in an $L^\infty$-type space on domains with boundaries was 
addressed only recently
(see \cite{AG13} for the case of a bounded domain with no-slip boundary conditions, and
\cite{AG14} for the case of an exterior domain).

\medskip

In this short note we give an affirmative answer to the above question; more precisely,
we prove the following theorem.

\begin{theorem}\label{infinity}
Let $u$ be a unique mild solution to the 3D NSE emanating from an initial
datum $u_0$ in $L^\infty$, and $T>0$ be the first possible blow-up time. 
There exists a positive (absolute) constant $m_0$ such that if the solution $u$
satisfies
\[ 
 \sup_{t \in (T-\epsilon, T)} \|u(t)\|_{B^{-1}_{\infty, \infty}} \le m_0,
\]
for some $0 < \epsilon <T$, then $T$ is not a blow-up time, and the solution
can be continued past $T$.
\end{theorem}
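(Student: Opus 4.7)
The plan is to argue by contradiction: suppose $T$ is a blow-up time, so that $M(t) := \|u(t)\|_\infty \to \infty$ as $t \to T^-$. I will use the smallness of $\|u(t)\|_{B^{-1}_{\infty,\infty}}$ to produce a uniform \emph{a priori} bound on $M(t)$ for $t$ near $T$, contradicting blow-up. The argument rests on the interplay between three ingredients: (a) a quantitative lower bound on the spatial analyticity radius in the $L^\infty$-Gevrey class, (b) a translation of Besov smallness into \emph{local volumetric sparseness} of the super-level sets of the velocity components, and (c) a sparseness-implies-smallness majorization for analytic functions.

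For (a), I invoke the $L^\infty$-Gevrey regularity theory: at each $t \in (T-\epsilon,T)$ the solution admits a holomorphic extension to a complex polydisc of polyradius $\rho(t) \gtrsim 1/M(t)$ with sup-norm $\le cM(t)$ on the polydisc. This should follow from a bilinear fixed-point argument run in a Gevrey-$L^\infty$ class built on the analyticity of the Stokes semigroup in $L^\infty(\mathbb{R}^3)$, applied iteratively on intervals of length $\sim 1/M(t)^2$ up to $T$; the relevant output is precisely the ``radius of spatial analyticity measured in $L^\infty$'' mentioned in the abstract.

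For (b), using the heat-semigroup characterization
\[
\|f\|_{B^{-1}_{\infty,\infty}} \sim \sup_{s>0}\sqrt{s}\,\|e^{s\Delta}f\|_\infty,
\]
I specialize the bound $\|e^{s\Delta}u(\cdot,t)\|_\infty \le m_0/\sqrt{s}$ to the scale $s \sim \rho(t)^2$, obtaining $\|e^{s\Delta}u(\cdot,t)\|_\infty \lesssim m_0\, M(t)$. Interpreting $e^{s\Delta}u$ as a Gaussian mean of $u$ on balls of radius $\rho(t)$ and running a Chebyshev/averaging argument, I derive: for every $x_0 \in \mathbb{R}^3$, every component $i \in \{1,2,3\}$, and an appropriate threshold $\lambda \in (0,1)$,
\[
\bigl|\{y \in B(x_0,\rho(t)) : |u_i(y,t)| > \lambda\, M(t)\}\bigr| \le \eta(m_0)\,|B(x_0,\rho(t))|,
\]
with $\eta(m_0) \to 0$ as $m_0 \to 0$. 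This is the local volumetric sparseness on the scale $\rho(t)$ that the abstract advertises.

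For (c), a Remez/three-balls inequality for functions holomorphic on the polydisc --- derivable from the subharmonicity of $\log|u_i|$ combined with a Poisson-kernel/harmonic-measure majorization --- converts the sparseness into an improved pointwise majorization at carefully selected near-maximizing base-points $x_0$. Optimizing in the free parameters $\lambda$, $\eta$, and the choice of base-point should yield an \emph{a priori} inequality on $M(t)$ that forces $M(t) \le C(m_0, u_0)$ on $(T-\epsilon,T)$, contradicting blow-up; the standard $L^\infty$ continuation criterion then extends $u$ past $T$. The main obstacle is precisely step (a): producing the analyticity bound $\rho(t) \gtrsim 1/M(t)$ \emph{uniformly} on $(T-\epsilon,T)$ with no global integrability of $u$ --- the classical Foias--Temam $L^2$-Gevrey approach is unavailable in this $L^\infty$-only regime, so a mild-formulation Gevrey-$L^\infty$ bootstrap respecting the natural lifespan $\sim 1/M(t)^2$ is required.
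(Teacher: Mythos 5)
Your overall architecture --- analyticity radius $\sim 1/M(t)$ in $L^\infty$, Besov smallness $\Rightarrow$ volumetric sparseness of super-level sets at that scale, sparseness $\Rightarrow$ pointwise control via harmonic-measure/subharmonicity --- is exactly the paper's (they cite Guberovi\'c for step (a), so that step is not the obstacle you fear, and they close the loop with the 1D-sparseness regularity criterion of \cite{Gr13} evaluated at an escape time). However, your step (b) contains a genuine error as stated. You claim that $\|u(t)\|_{B^{-1}_{\infty,\infty}} \le m_0$ forces
\[
\bigl|\{y \in B(x_0,\rho) : |u_i(y,t)| > \lambda M(t)\}\bigr| \le \eta(m_0)\,|B(x_0,\rho)|
\]
with $\eta(m_0) \to 0$ as $m_0 \to 0$. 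This is false because of sign cancellation: take $u_i = M\sin(Nx_1)$ with $N \gg 1/\rho$. Its Gaussian mean at scale $\rho$ (equivalently its $B^{-1}_{\infty,\infty}$-norm) is $O(M/N)$, hence arbitrarily small, yet $\{|u_i| > M/2\}$ has density $\ge 1/2$ in \emph{every} ball. A Chebyshev argument on $e^{s\Delta}u_i$ controls the average, not the measure of the set where the \emph{magnitude} is large.

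The correct statement --- and this is precisely what the paper's Lemma \ref{mixing_lemma} proves, by pairing $u_i$ against a smooth bump $f$ supported near $B(x_0,r)$ and estimating $\|f\|_{B^1_{1,1}} \lesssim r^{3-\veps}$ --- concerns the \emph{signed} super-level sets $\{u_i^\pm > \lambda\|u\|_\infty\}$, and even for these the achievable density bound is only $\delta > \frac{1}{1+\lambda}$, \emph{independently} of how small $m_0$ is: if the positive part exceeds $\lambda M$ on a fraction $\delta$ of the ball, the remaining fraction $1-\delta$ can contribute as much as $-M$ each to the average, so one needs $\lambda\delta - (1-\delta) > 0$ to force a large pairing. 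In the sine example the set $\{u_i > M/2\}$ has density about $1/3 < 2/3 = \frac{1}{1+1/2}$ in every ball, consistent with this. So you cannot get $\eta(m_0)\to 0$; you can only exclude densities above $\frac{1}{1+\lambda}$. Fortunately this fixed ratio suffices: with $\lambda = \frac12$ one gets 3D $\frac34$-sparseness of the six component sets, hence 1D $(\frac34)^{1/3}$-sparseness at some smaller scale, which is exactly the hypothesis of the regularity criterion. Your step (c) would then need to be run on the plurisubharmonic functions $u_i^\pm$ (not $\log|u_i|$, whose super-level sets you have no control over), and the cleanest way to convert the resulting majorization into a contradiction is the paper's device of evaluating everything at an escape time, where the improved bound $\|u(s)\|_\infty$ directly contradicts the defining monotonicity property.
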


The idea of the proof is to utilize a recent $L^\infty$-theory of formulating geometric
regularity criteria for the 3D NSE based on \emph{local 1D sparseness} of the super-level sets 
\cite{Gr13}
in conjunction
with a technical lemma quantifying the amount of 3D/volumetric sparseness of a super-level set
imposed by
membership of the vector field in view in the space $B^{-1}_{\infty, \infty}$.
It is worth mentioning that the argument reveals that the assumption on local sparseness
of the super-level sets is in fact a \emph{weaker condition} than the assumption on the smallness
of the solution in $L^\infty (0,T; B^{-1}_{\infty, \infty})$ (see Remark \ref{yay}).

\medskip

A related avenue to understanding the role that scaling-invariant spaces play in
the regularity theory of the 3D NSE is consideration of well-posedness for 
small initial data.
The best result in this direction so far is the result of Koch and Tataru \cite{KT01},
taking the initial data in $BMO^{-1}$. The question of whether a
small initial data result is possible in the largest scaling-invariant space $B^{-1}_{\infty, \infty}$
is still open; however, there are indications that the answer to this question might be negative
(see, e.g., \cite{BP08}).

\medskip

The note is organized as follows. Section 2 contains the preliminaries regarding 
the role that local 1D sparseness of the super-level sets plays in controlling
the $L^\infty$-norm of the solution, Section 3 presents a technical lemma connecting
the space $B^{-1}_{\infty, \infty}$ to  3D sparseness of the super-level sets,
and Section 4 contains the proof of the above theorem, and a remark on a scenario in 
which the smallness condition is not needed.

%-----------------------------------------------------------------------
\section{Sparseness} 
%-----------------------------------------------------------------------

\noindent The concept of `local 1D sparseness' of a set has recently emerged
in the study of geometric conditions preventing possible formation of
singularities in the 3D NSE (cf. \cite{Gr13}).

\medskip

Let
$S \subseteq \nR^3$ be an open set, $x_0$ a point in $\nR^3$,
$r>0$, and $\delta \in (0,1)$ ($m$ will denote the Lebesgue measure).

\begin{definition}\label{1D_sparse} 
$S$ is {\it 1D $\delta$-sparse around $x_0$ at scale $r$} if there exists a unit vector 
${\bf d}$ in $\nS^2$ such that 
$$ 
\frac{m(S\cap (x_0-r{\bf d}, x_0+r{\bf d}))}{2r} \leq \delta.
$$
\end{definition} 

The volumetric version is as follows.

\begin{definition}\label{3D_sparse} 
$S$ is {\it 3D $\delta$-sparse
around $x_0$ at scale $r$} if 
$$
\frac{m(S\cap B(x_0,r))}{m(B(x_0,r))} \leq \delta.
$$
\end{definition}

\begin{remark}
It is plain that if $S$ is 3D $\delta$-sparse around $x_0$ at scale $r$,
then $S$ is automatically 1D $(\delta)^{1/3}$-sparse around $x_0$
at scale $\rho$, for some $0 < \rho \le r$.
(This is easily seen by assuming the opposite; then, integrating the
characteristic function of $S \cap B(x_0, \rho)$ in `polar' 
coordinates--assuming the worst case scenario--yields the 
contradiction.)
\end{remark}

The main idea of how the local sparseness of the super-level sets is used
in conjunction with the spatial analyticity of solutions to obtain 
control of the $L^\infty$-norm is very simple (the super-level sets considered
here are the regions in which the magnitude is above a fraction of the
$L^\infty$-norm). Intuitively, a high degree of sparseness near a possible
blow-up time indicates a high level of \emph{spatial complexity} (e.g., rapid spatial 
oscillations) that eventually becomes incompatible with the uniform 
local-in-time spatial analyticity 
properties of solutions, leading to a contradiction (as in a typical blow-up
argument). Technically, this is realized via the harmonic measure
maximum principle (see \cite{Gr13}).

\medskip

One should mention that the morphology of the regions of \emph{intense
velocity} and the regions of \emph{intense vorticity} in turbulent flows is 
quite different. On one hand, the velocity regions are (in the average)
homogeneous and \emph{isotropic}, while on the other hand, the vorticity regions 
are (in the average) \emph{locally anisotropic} and dominated by
\emph{vortex filaments} \cite{S81, SJO91, JWSR93, VM94, CPS95}. 
However, in both cases, a geometric signature is the one of
sparseness; more precisely, 3D sparseness and (local) 1D sparseness,
respectively. A mathematical story about how the interplay between vortex stretching and 
locally anisotropic diffusion might lead to closing the `scaling gap' in
the 3D NS regularity problem--motivated by G.I. Taylor's view on turbulent
dissipation \cite {Tay37}--was presented in \cite{Gr13, DaGr12-3, BrGr13-2}; here, we address
the scenario of the volumetric (3D) sparseness of the regions of intense
velocity.

\medskip

Since the local-in-time spatial analyticity properties of solutions play
a key role in the theory, we recall a variant of the pertinent result obtained in \cite{Gu10},
inspired by the method of finding a lower bound on the uniform radius of
spatial analyticity of solutions in $L^p$ spaces introduced in 
\cite{GrKu98}.

\medskip

\begin{theorem}\emph{[Gu10]}\label{an_u}
Let $u_0$ be in $L^\infty$. Then, for any $M>1$, there exist
$C(M)$ and $\widetilde{C}(M)$, 
such that setting
$\displaystyle{T=\frac{1}{{C(M)}^2 \|u_0\|_\infty^2}}$, a unique mild
solution $u=u(t)$ on $[0,T]$ has the analytic extension $U=U(t)$ to
the region
\[
 \mathcal{R}_t = \{x+iy \in \mathbb{C}^3 : \, |y| \le \frac{1}{\widetilde{C}(M)}
 \sqrt{t}\}
\]
for any $t$ in $(0,T]$, and
\[
 \|U(t)\|_{L^\infty(\mathcal{R}_t)} \le M \|u_0\|_\infty
\]
for all $t$ in $[0,T]$.
\end{theorem}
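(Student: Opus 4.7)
The plan is to adapt the scheme of Gruji\'c and Kukavica \cite{GrKu98}---originally devised for $L^p$ with $p<\infty$---to the $L^\infty$ setting, running a contraction-mapping argument on the complexified mild formulation in a scale of spaces whose norm is the $L^\infty$-norm over the complex strip $\mathcal{R}_t$ of width $\widetilde{C}(M)^{-1}\sqrt{t}$.

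First I would write the mild solution in Duhamel form $u(t)=e^{t\triangle}u_0+B(u,u)(t)$, where $B(u,v)(t)=-\int_0^t e^{(t-s)\triangle}\,\mathbb{P}\,\nabla\cdot(u\otimes v)(s)\,ds$, and complexify: since the scalar heat kernel $G_t(x)=(4\pi t)^{-3/2}\exp(-|x|^2/(4t))$ extends to an entire function of $x\in\mathbb{C}^3$, iterating the Duhamel formula produces a candidate holomorphic extension $U(x+iy,t)$. The underlying pointwise bound is $|G_t(x+iy)|\le e^{|y|^2/(4t)}\,G_t(x)$; for $|y|\le \widetilde{C}(M)^{-1}\sqrt{t}$ this yields the inflation factor $e^{1/(4\widetilde{C}(M)^2)}$, which tends to $1$ as $\widetilde{C}(M)\to\infty$. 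The Oseen tensor $K(x,t)$ associated with $e^{t\triangle}\,\mathbb{P}\,\nabla$ admits an analogous pointwise Gaussian-type estimate valid uniformly in the imaginary direction, so that $\|K(\cdot,t)\|_{L^1(\mathbb{R}^3)}\le \beta(M)\,t^{-1/2}$ in the complexified version.

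Next, let $X_T$ be the Banach space of $U$ with norm $\|U\|_{X_T}=\sup_{t\in(0,T]}\|U(\cdot,t)\|_{L^\infty(\mathcal{R}_t)}$, and let $\mathcal{B}_M=\{U\in X_T:\|U\|_{X_T}\le M\|u_0\|_\infty\}$. The two resulting estimates $\|e^{t\triangle}u_0\|_{L^\infty(\mathcal{R}_t)}\le \alpha(M)\|u_0\|_\infty$ (with $\alpha(M)\to 1$ as $\widetilde{C}(M)\to\infty$) and $\|B(U,V)(t)\|_{L^\infty(\mathcal{R}_t)}\le \beta(M)\int_0^t(t-s)^{-1/2}\|U(s)\|_{L^\infty(\mathcal{R}_s)}\,\|V(s)\|_{L^\infty(\mathcal{R}_s)}\,ds$ then integrate to $2\beta(M)\sqrt{T}\,\|U\|_{X_T}\|V\|_{X_T}$. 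Choosing $T=1/(C(M)^2\|u_0\|_\infty^2)$ with $C(M)$ large enough that $\alpha(M)+\beta(M)M^2\sqrt{T}\,\|u_0\|_\infty\le M$ and $4\beta(M)M\sqrt{T}\,\|u_0\|_\infty<1$ makes $U\mapsto e^{t\triangle}u_0+B(U,U)$ a contraction on $\mathcal{B}_M$. The fixed point $U$ is holomorphic in $x+iy$, obeys the stated $L^\infty(\mathcal{R}_t)$ bound, and its restriction to $y=0$ coincides with the unique $L^\infty$ mild solution $u$.

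The main technical obstacle is the bilinear estimate, because $\mathbb{P}$ is not bounded on $L^\infty$; one must avoid abstract operator bounds and work directly with the explicit Oseen kernel, using its pointwise Gaussian decay (uniform for $|\mathrm{Im}\,x|\lesssim\sqrt{t}$) to produce the $L^1_x$ bound $\le \beta(M)t^{-1/2}$. Once this kernel estimate is secured, the contraction closes routinely; the only delicate bookkeeping is tracking how $\alpha(M)$ and $\beta(M)$ converge to universal constants as $\widetilde{C}(M)\to\infty$, which is precisely what permits the $L^\infty(\mathcal{R}_t)$-norm of the fixed point to be pushed below $M\|u_0\|_\infty$ for every prescribed $M>1$.
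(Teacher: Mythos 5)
Your proposal is correct and follows essentially the same route as the source the paper cites for this statement (\cite{Gu10}, itself an $L^\infty$ adaptation of the scheme of \cite{GrKu98}): a contraction-mapping argument for the complexified mild formulation on expanding strips of width $\sim \widetilde{C}(M)^{-1}\sqrt{t}$, with the heat and Oseen kernels estimated at complex spatial arguments. The one step you should make explicit is the contour deformation $z \mapsto z + iy\sqrt{s/t}$ in the Duhamel integral, since that is what allows you to bound the bilinear term on the wide strip $\mathcal{R}_t$ by the norms of $U(s), V(s)$ on the narrower strips $\mathcal{R}_s$ while keeping the kernel's imaginary displacement below $\widetilde{C}(M)^{-1}(\sqrt{t}-\sqrt{s}) \le \widetilde{C}(M)^{-1}\sqrt{t-s}$, where the complexified kernel bound actually holds.
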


\medskip

(For the results on spatial analyticity of the 3D NSE in the critical
Besov spaces, see \cite{BBT12}.)

\medskip

Then, a variant of the main result in \cite{Gr13} reads as follows.

\medskip

\begin{theorem}\emph{[Gr13]}\label{sparse_u}
Suppose that a solution $u$ is regular on an interval $(0,T^*)$.
(Recall that $u$ is then necessarily in $C\Bigl((0,T^*);
L^\infty\Bigr)$.)

\medskip

Let $M$ be the solution to the equation $\frac{1}{2}h+(1-h)M=1$, where
$h=\frac{2}{\pi}\arcsin\frac{1-(\frac{3}{4})^\frac{2}{3}}{1+(\frac{3}{4})^\frac{2}{3}}$,
and let  $C(M), \widetilde{C}(M)$ be as in Theorem \ref{an_u}
(note that $M>1$).
Assume that there exists
$\epsilon >0$ such that for any $t$ in $(T^*-\epsilon, T^*)$, either

\medskip

(i) \ $\displaystyle{t+\frac{1}{C(M)^2 \|u(t)\|_\infty^2} \ge T^*}$, or

\medskip

(ii) \ there
exists $s=s(t)$ in $\Bigl[t+\frac{1}{4C(M)^2 \|u(t)\|_\infty^2},
t+\frac{1}{C(M)^2 \|u(t)\|_\infty^2}\Bigr]$ such that for any spatial point
$x_0$, there exists a scale $r$, $0<r\le \frac{1}{2 C(M) \widetilde{C}(M)
\|u(t)\|_\infty}$, with the property that the component super-level sets
\[
 B^{i, \pm}_s=\{x \in \mathbb{R}^3: \, u_i^\pm(x,s) > \frac{1}{2}\|u(t)\|_\infty\}
\]
are 1D $(\frac{3}{4})^\frac{1}{3}$-sparse around $x_0$ at scale $r$, for $i=1,2,3$
(here, as customary, for a real-valued function $g$, $g^+(x)=\max\,(g(x),0)$ and
$g^-(x)=-\min\,(g(x),0)$).

\medskip

Then, there exists $\gamma >0$ such that $u$ is in
$L^\infty\Bigl((T^*-\epsilon, T^*+\gamma); L^\infty\Bigr)$, i.e.,
$T^*$ is not a singular time.
\end{theorem}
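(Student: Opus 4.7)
The plan is to prevent blow-up at $T^*$ by showing that $\|u(t)\|_\infty$ stays bounded as $t \uparrow T^*$, so that a final application of Theorem \ref{an_u} yields a smooth extension past $T^*$. To this end, I build a finite sequence $t_0 < t_1 < \cdots < t_K$ inside $(T^*-\epsilon, T^*)$ along which $t \mapsto \|u(t)\|_\infty$ is non-increasing and which terminates once alternative (i) holds. Write $\tau_k := 1/(C(M)^2 \|u(t_k)\|_\infty^2)$, and fix any $t_0 \in (T^*-\epsilon, T^*)$. If alternative (i) already holds at $t_0$, we are done; otherwise alternative (ii) produces $t_1 := s(t_0) \in [t_0 + \tau_0/4,\, t_0 + \tau_0]$ at which the component super-level sets enjoy the stated 1D sparseness. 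The central assertion, proved below, is the \emph{monotonicity step}: whenever (ii) applies at $t_k$,
\[
\|u(s(t_k))\|_\infty \le \|u(t_k)\|_\infty.
\]

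Granting monotonicity, each $\tau_k \ge \tau_0$ so the increments $t_{k+1} - t_k$ are bounded below by $\tau_0/4 > 0$, whence after at most $K = \lceil 4(T^* - t_0)/\tau_0\rceil$ steps the iteration must stop. It can only stop by reaching alternative (i) at some $t_K$: indeed, if (ii) held at $t_{K-1}$ with $s(t_{K-1}) \ge T^*$, then $t_{K-1} + \tau_{K-1} \ge s(t_{K-1}) \ge T^*$, which is precisely (i) at $t_{K-1}$. Applying Theorem \ref{an_u} to the datum $u(t_K)$ extends $u$ analytically on $[t_K, t_K + \tau_K]$, an interval of length exceeding $T^* - t_K$ by (i); this yields the positive $\gamma$ of the statement, and the uniform bound $\|u(t)\|_\infty \le \|u(t_0)\|_\infty$ on $[t_0, T^* + \gamma)$ combined with continuity of $u$ in $L^\infty$ on $(0, T^*)$ delivers $u \in L^\infty((T^* - \epsilon, T^* + \gamma); L^\infty)$.

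For the monotonicity step, fix $t = t_k$ and $s = s(t_k)$ and abbreviate $N := \|u(t)\|_\infty$. By Theorem \ref{an_u} applied to the datum $u(t)$, the solution $u(s)$ has a holomorphic extension to the complex strip of width $\sqrt{s-t}/\widetilde{C}(M)$ with sup-norm at most $MN$; since $s - t \ge \tau_k/4$, this analyticity radius is no less than $1/(2C(M)\widetilde{C}(M) N)$, hence dominates the sparseness scale $r = r(x_0)$ allowed in hypothesis (ii). Fix any $x_0 \in \mathbb{R}^3$, any coordinate $i$, and either sign. If $x_0 \notin B^{i,\pm}_s$ the bound $u_i^\pm(x_0, s) \le N/2$ is trivial; otherwise restrict the analytic extension of $u_i$ to the complex line through $x_0$ in the sparseness direction $\mathbf{d} \in \nS^2$, obtaining a holomorphic function on the 2D disk $D \subset \mathbb{C}$ of radius $r$ centered at $x_0$, with $|u_i| \le MN$ throughout $D$ and the one-sided bound $u_i^\pm \le N/2$ on a subset $E$ of the real diameter $I = (x_0 - r\mathbf{d}, x_0 + r\mathbf{d})$ of 1D measure at least $(1 - (3/4)^{1/3}) \cdot 2r$.

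The monotonicity step now reduces to a harmonic-measure estimate on $D$: the real-valued harmonic function $u_i$ on $D$ (as the real part of a bounded holomorphic function) satisfies a two-constants inequality of the form
\[
u_i^\pm(x_0, s) \le \tfrac{1}{2} N \cdot \omega + MN \cdot (1 - \omega),
\]
where $\omega$ is the harmonic measure, at the point $x_0$, of the ``good'' set $E$ relative to an appropriate subdomain of $D$ bounded on one side by a portion of the diameter $I$ and on the other by an arc of $\partial D$. By the monotonicity of harmonic measure under set inclusion one reduces to the worst-case configuration in which $E$ consists of two intervals symmetric about $x_0$ with total length exactly $(1 - (3/4)^{1/3}) \cdot 2r$; a M\"obius-type conformal change of variable then evaluates $\omega = h = \tfrac{2}{\pi}\arcsin\tfrac{1 - (3/4)^{2/3}}{1 + (3/4)^{2/3}}$. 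Plugging this into the two-constants inequality and invoking the defining identity $\tfrac{1}{2} h + (1-h) M = 1$ collapses the right-hand side to $N$; taking the supremum over $x_0$, $i$, and the sign completes the monotonicity step. \textbf{The main obstacle} of the argument is precisely this harmonic-measure computation---pinning down the correct subdomain of $D$ on which to apply the two-constants theorem, reducing to the worst-case symmetric configuration of $E$, and performing the explicit conformal evaluation that produces exactly the constant $h$ featured in the hypothesis on $M$.
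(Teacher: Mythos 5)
Your proposal reconstructs essentially the argument the paper relies on: Theorem \ref{sparse_u} is not reproved in the paper but is quoted from \cite{Gr13}, and the paper's own description of that proof matches your scheme exactly --- a finite sequence of times $t_{k+1}=s(t_k)$ whose increments are bounded below (so the sequence would surpass $T^*$), with the norm control at each step coming from the two-constants/harmonic-measure estimate applied to the \emph{subharmonic} functions $u_i^\pm$ on a disk in a complexified line (this componentwise, single-point localization is precisely the refinement the paper explains after the theorem). Your quantitative bookkeeping is also right: $s-t\ge \frac{1}{4C(M)^2\|u(t)\|_\infty^2}$ makes the analyticity radius $\sqrt{s-t}/\widetilde C(M)$ at least $\frac{1}{2C(M)\widetilde C(M)\|u(t)\|_\infty}\ge r$, the extension is bounded by $M\|u(t)\|_\infty$, and the identity $\frac{1}{2}h+(1-h)M=1$ collapses the two-constants bound to $\|u(t_k)\|_\infty$, giving the monotonicity that keeps the increments from shrinking; termination can only occur at alternative (i), after which Theorem \ref{an_u} extends the solution past $T^*$.

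The one genuine gap is your justification of the harmonic-measure lower bound $\omega\ge h$. Monotonicity of harmonic measure under set inclusion only allows you to \emph{shrink} $E$ to measure exactly $(1-(3/4)^{1/3})\,2r$; it cannot \emph{relocate} $E$, which is a rearrangement, not an inclusion. The extremal configuration is the pair of intervals abutting $\partial D$, i.e.\ (normalizing $x_0=0$, $r=1$, $\delta=(3/4)^{1/3}$) the good set $[-1,-\delta]\cup[\delta,1]$, and proving that this minimizes $\omega(x_0,E,D\setminus E)$ over all admissible $E$ on the diameter requires a polarization/symmetrization-type extremal theorem for harmonic measure, which is exactly the nontrivial input invoked in \cite{Gr13}; the cheap substitute, Beurling's projection theorem, is insufficient here because projecting the two slits onto a single radius halves the measure and produces a constant strictly smaller than $h$, incompatible with the specific $M$ defined in the statement. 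Three smaller inaccuracies: the correct domain for the two-constants theorem is the slit disk $D\setminus E$, whose boundary is $\partial D$ together with the two sides of $E$ --- in your ``half-disk'' description $x_0$ would lie \emph{on} the boundary, where the estimate is vacuous; the explicit evaluation is via the squaring map $z\mapsto z^2$, which reduces the symmetric end-pair to the single radial slit $[\delta^2,1]$ and yields $h=\frac{2}{\pi}\arcsin\frac{1-\delta^2}{1+\delta^2}$ with $\delta^2=(3/4)^{2/3}$ (not a M\"obius change of variable, and your ``two symmetric intervals'' must be specified as touching $\partial D$); and between consecutive $t_k$'s the norm is controlled only by $M\|u(t_0)\|_\infty$ (via Theorem \ref{an_u}), not by $\|u(t_0)\|_\infty$ --- harmless for the conclusion, but the uniform bound you state is not the one you actually have.
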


\medskip

This is a refinement of the theorem in \cite{Gr13} in the sense that instead of postulating
the sparseness of the full vectorial super-level set, only the sparseness of each of the
six component super-level sets is required. The reason that the proof remains
the same is that the argument is \emph{completely local}, i.e., we are estimating 
$|u(x_0,s_0)|$ one spatial point at a time, and since (considering the maximum
vector norm in $\mathbb{R}^3$) $|u(x_0,s_0)|$ is equal to one of the six
$u_i^\pm(x_0,s_0)$, we can simply apply the harmonic measure maximum 
principle to the \emph{subharmonic} function $u_i^\pm$.

\medskip

Note that in the statement of the above theorem (as well as in the original theorem), 
the super-level sets are considered
at a time $s(t)$, with respect to the level depending on a preceding time $t$. This is
not an optimal setting for the argument that we wish to make in the final section. To alleviate
this, we state a different version of the theorem in which \emph{everything} is
evaluated at the \emph{same} point in time; the trade-off is that this is possible only for
suitably chosen times based off the concept of an `escape time'. 

\begin{definition}\label{escape_time} 
Let $u_0$ be in $L^\infty$, $u$ a unique mild solution emanating form $u_0$,
and $T>0$ the first possible blow-up time. A time $t$ in $(0, T)$ is an
\emph{escape time} if $\|u(t)\|_\infty < \|u(\tau)\|_\infty$ for any $\tau$
in $(t, T)$.
\end{definition}

\medskip

\begin{remark}
Local-in-time well-posedness in $L^\infty$ implies that there are
continuum many escape times.
\end{remark}

Observing that
for an escape time $t$ and any $s(t)$ in $\Bigl[t+\frac{1}{4C(M)^2 \|u(t)\|_\infty^2},
t+\frac{1}{C(M)^2 \|u(t)\|_\infty^2}\Bigr]$,

\[
 \frac{1}{M} \|u(s(t))\|_\infty \le \|u(t)\|_\infty < \|u(s(t))\|_\infty,
\]

\medskip

\noindent a slight modification of the proof of the theorem yields the desired
version. More precisely, the utility of $t$ being an escape time is twofold.
Firstly, since the $L^\infty$-norms at $t$ and $s(t)$ are now comparable,
there is no need for the time-lag when setting the super-level set cut-off.
Secondly, in the original argument (\cite{Gr13}), the temporal points at which
the super-level sets were considered were organized in a finite sequence, and
the sparseness--via the harmonic measure maximum principle--guaranteed
that the distance between the consecutive points did not shrink, causing the sequence to
eventually surpass the possible singular time $T$, yielding the contradiction. In the
current setting, the contradiction is obtained in a single temporal
step as the sparseness-induced control on the $L^\infty$-norm contradicts the
defining property of being an escape time. 

\medskip

\begin{theorem}\label{escape_not}
Let $u_0$ be in $L^\infty$, $u$ a unique mild solution emanating form $u_0$,
$T>0$ the first possible blow-up time, and $t$ an escape time.

Suppose that there exists 
$s=s(t)$ in $\Bigl[t+\frac{1}{4C(M)^2 \|u(t)\|_\infty^2},
t+\frac{1}{C(M)^2 \|u(t)\|_\infty^2}\Bigr]$ such that for any spatial point
$x_0$, there exists a scale $\rho$, $0<\rho\le \frac{1}{2 C(M) \widetilde{C}(M)
\|u(s(t))\|_\infty}$, with the property that the component super-level sets
\[
 A^{i, \pm}_s=\{x \in \mathbb{R}^3: \, u_i^\pm(x,s(t)) > \frac{1}{2}\|u(s(t))\|_\infty\}
\]
are 1D $(\frac{3}{4})^\frac{1}{3}$-sparse around $x_0$ at scale $\rho$, for $i=1,2,3$.

\medskip

Then $T$ is not a blow-up time.
\end{theorem}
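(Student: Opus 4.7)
The plan is to argue by contradiction: assume $T < \infty$ is a genuine blow-up time, then retrace the harmonic-measure argument of Theorem \ref{sparse_u} in a single temporal step, exploiting the escape-time property to produce the contradiction without any iteration. The key simplification is that, at an escape time $t$, $\|u(t)\|_\infty$ and $\|u(s(t))\|_\infty$ are comparable (and ordered, $\|u(t)\|_\infty < \|u(s(t))\|_\infty$), so the super-level cutoff and the analytic-extension bound can be anchored at the \emph{same} instant $s(t)$.

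First I would apply Theorem \ref{an_u} at time $t$, treating $u(t)$ as the initial datum: this yields a complex-analytic extension $U(\tau)$ of $u(\tau)$ to the strip $\mathcal{R}_{\tau - t}$ for $\tau \in (t, t + 1/(C(M)^2 \|u(t)\|_\infty^2)]$, with $\|U(\tau)\|_{L^\infty(\mathcal{R}_{\tau-t})} \le M \|u(t)\|_\infty$. Since $s(t) \ge t + 1/(4 C(M)^2 \|u(t)\|_\infty^2)$, the imaginary analyticity radius at $s(t)$ is at least $1/(2 C(M) \widetilde{C}(M) \|u(t)\|_\infty)$; together with the escape-time inequality, this strictly exceeds the admissible scale $\rho \le 1/(2 C(M) \widetilde{C}(M) \|u(s(t))\|_\infty)$. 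Hence on every complex disk of radius $\rho$ along every real direction through every spatial $x_0$, the uniform analytic bound $M \|u(t)\|_\infty$ is in force.

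Next, for arbitrary $x_0 \in \nR^3$ and an arbitrary component $u_i^\pm$, let $\mathbf{d}$ be the direction furnished by the 1D sparseness of $A^{i,\pm}_s$, and consider the nonnegative subharmonic function $v(z) := u_i^\pm(x_0 + z\mathbf{d}, s(t))$ on $D_\rho = \{z \in \nC : |z| < \rho\}$. On the circle $|z| = \rho$ one has $v \le |U_i| \le M \|u(t)\|_\infty$, while on the real diameter $(-\rho, \rho)$ the set where $v > \tfrac{1}{2}\|u(s(t))\|_\infty$ has 1D Lebesgue measure at most $2\rho (3/4)^{1/3}$. Applying the two-constants (harmonic-measure maximum) principle in the associated slit disk --- precisely the conformal-map computation of \cite{Gr13} that produces the constant $h$ --- yields
\[
u_i^\pm(x_0, s(t)) \le \tfrac{h}{2}\|u(s(t))\|_\infty + (1-h) M \|u(t)\|_\infty.
\]

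Taking the supremum over $(i, \pm)$ and over $x_0 \in \nR^3$ (with the max-norm on $\nR^3$) gives $\|u(s(t))\|_\infty \le \tfrac{h}{2}\|u(s(t))\|_\infty + (1-h) M \|u(t)\|_\infty$. Rewriting the calibration $\tfrac{1}{2} h + (1-h) M = 1$ as $(1-h) M = 1 - \tfrac{h}{2}$, this collapses to $\|u(s(t))\|_\infty \le \|u(t)\|_\infty$, directly contradicting the defining property of the escape time $t$. The genuinely nontrivial step is the harmonic-measure estimate itself --- converting the 1D Lebesgue sparseness of the super-level set along a diameter of $D_\rho$ into a quantitative lower bound on the harmonic measure of the complementary ``good'' set, and hence into the convex-combination inequality displayed above; this is the slit-disk conformal-map argument of \cite{Gr13}, which I would invoke rather than reprove. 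The only bookkeeping differences from the proof of Theorem \ref{sparse_u} are that the super-level cutoff is anchored at $\tfrac{1}{2}\|u(s(t))\|_\infty$ rather than $\tfrac{1}{2}\|u(t)\|_\infty$, and that escape-time monotonicity delivers the contradiction in a single step, replacing the iterative step-size argument of the original proof.
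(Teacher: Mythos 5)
Your proposal is correct and follows essentially the same route the paper intends: the paper only sketches the proof of Theorem \ref{escape_not} (in the paragraph preceding the statement), noting that one reruns the harmonic-measure argument of Theorem \ref{sparse_u} with the cutoff anchored at $s(t)$ and obtains the contradiction with the escape-time property in a single temporal step, which is precisely what you carry out. Your version merely makes explicit the bookkeeping (analyticity radius versus $\rho$, the convex-combination inequality, and the calibration $\tfrac{1}{2}h+(1-h)M=1$) that the paper defers to \cite{Gr13}.
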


\section{Mixing}

\noindent A closely related concepts of a set being $r$-mixed (or `mixed to scale $r$')
and $r$-semi-mixed appeared in the study of 
rearrangements and
mixing properties of incompressible
flows (see, e.g., \cite{Br03} and \cite{IKX14}). 

\begin{definition}\label{semi_mixed} 
Let $r>0$.  An open set $S$ is $r$-\emph{semi-mixed} if
$$
\frac{m(S\cap B(x,r))}{m(B(x,r))} \leq \delta
$$ 
for every $x\in \nR^3$, and for some $\delta \in (0,1)$.
If the complement, $S^c$, is $r$-semi-mixed as well, then $S$ is said to be 
$r$-\emph{mixed}.
\end{definition}

\begin{remark}
If the set $S$ is $r$-semi-mixed (with the ratio $\delta$), then it is 3D $\delta$-sparse around every 
point $x_0 \in \nR^3$ at scale $r$.
\end{remark}

\noindent The following lemma is a vector-valued, Besov space version of a scalar-valued, Sobolev space
lemma in \cite{IKX14}. All the norms to appear in the statement of
the lemma are $\infty$-type norms.

\bigskip

\begin{lemma}\label{mixing_lemma}
Let $\epsilon \in (0,1], \, r \in (0,1]$ and $u$ a vector-valued function in $L^\infty$.
Then, for any pair $\lambda, \delta$, $\lambda \in (0,1)$ and $\delta \in (\frac{1}{1+\lambda}, 1)$, 
there exists an explicit constant 
$c=c(\lambda, \delta)$ such that if 
$$\norm{u}_{\B{-\veps}{\infty}{\infty}} \leq c(\lambda, \delta) \, r^{\veps} \norm{u}_{\Lp{\infty}}, $$
then each of the six super-level sets $A_\lambda^{i, \pm} := \left\{ x\in \nR^3: u_i^{\pm} > \lambda \norm{u}_{\Lp{\infty}}\right\}$ is 
$r$-semi-mixed with the ratio $\delta$. 
\end{lemma}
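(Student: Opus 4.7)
My plan is to prove the contrapositive via duality: if some $A^{i,\pm}_\lambda$ fails $r$-semi-mixedness with ratio $\delta$, pair $u_i$ with a smooth bump localized in the offending ball and derive a lower bound on $\|u\|_{B^{-\varepsilon}_{\infty,\infty}}$ using the duality $(B^\varepsilon_{1,1})^{*} = B^{-\varepsilon}_{\infty,\infty}$. Heuristically, macroscopic concentration of $u_i$ on a ball of radius $r$ makes $\int u_i\,\phi$ large for any bump $\phi$ on that ball, while the Besov norm of such a $\phi$ is small in the pertinent sense.

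Fix $i \in \{1,2,3\}$ and a sign, say $+$; the $-$ case follows by running the argument for $-u_i$. Assume $A := A^{i,+}_\lambda$ is not $r$-semi-mixed with ratio $\delta$, so that $m(A \cap B(x_0, r)) > \delta\, m(B(x_0, r))$ at some $x_0 \in \nR^3$. Choose a smooth radial cutoff $\phi$ with $\phi \equiv 1$ on $B(x_0, (1-\eta) r)$, $\phi \equiv 0$ off $B(x_0, r)$, $0 \le \phi \le 1$, where $\eta = \eta(\lambda, \delta) \in (0, 1)$ will be fixed shortly. Using $u_i > \lambda\|u\|_\infty$ on $A$ and $u_i \ge -\|u\|_\infty$ on $A^c$,
\[
\int u_i\,\phi \;\ge\; (1+\lambda)\|u\|_\infty \int_A \phi \;-\; \|u\|_\infty \int \phi .
\]
The shell inclusion gives $\int_A \phi \ge m(A \cap B(x_0, (1-\eta) r)) \ge \bigl[\delta - (1 - (1-\eta)^3)\bigr]\,m(B(x_0, r))$ and $\int \phi \le m(B(x_0, r))$, so the lower bound reduces to
\[
\int u_i\,\phi \;\ge\; \|u\|_\infty\, m(B(x_0, r))\,\bigl[(1+\lambda)\delta - 1 - (1+\lambda)(1 - (1-\eta)^3)\bigr].
\]
The bracket is strictly positive for some $\eta > 0$ precisely because the hypothesis forces $(1+\lambda)\delta > 1$. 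Picking $\eta$ so the bracket equals $\tfrac{1}{2}\bigl((1+\lambda)\delta - 1\bigr) =: c_1(\lambda, \delta)$ gives $\int u_i\,\phi \ge c_1\,\|u\|_\infty\, r^3$ up to a fixed dimensional constant.

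For the matching upper bound, the duality $(B^\varepsilon_{1,1})^{*} = B^{-\varepsilon}_{\infty,\infty}$ yields $\bigl|\int u_i\,\phi\bigr| \lesssim \|u\|_{B^{-\varepsilon}_{\infty,\infty}} \|\phi\|_{B^\varepsilon_{1,1}}$. Writing $\phi(y) = \phi_0((y - x_0)/r)$ for a fixed profile $\phi_0 = \phi_0(\eta)$, standard Besov scaling gives $\|\phi\|_{\dot B^\varepsilon_{1,1}} = r^{3-\varepsilon}\|\phi_0\|_{\dot B^\varepsilon_{1,1}}$ and $\|\phi\|_{L^1} = r^3\|\phi_0\|_{L^1} \le r^{3-\varepsilon}\|\phi_0\|_{L^1}$ (since $r \le 1$ and $\varepsilon > 0$), hence $\|\phi\|_{B^\varepsilon_{1,1}} \le C(\lambda, \delta)\,r^{3-\varepsilon}$. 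Combining the two estimates and rearranging delivers $\|u\|_{B^{-\varepsilon}_{\infty,\infty}} \ge c(\lambda, \delta)\, r^\varepsilon \|u\|_\infty$ with $c := c_1/C$, which is the desired contrapositive. The main obstacle is the algebra in the lower bound: the cutoff's shell loss $(1+\lambda)(1 - (1-\eta)^3)$ must be absorbed by the density surplus $(1+\lambda)\delta - 1$, and this is exactly why the hypothesis $\delta > 1/(1+\lambda)$ is imposed; the Besov-side estimates are otherwise routine scaling/Littlewood--Paley bookkeeping.
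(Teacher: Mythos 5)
Your proposal is correct and follows essentially the same route as the paper: argue by contradiction, test $u_i$ against a radial bump adapted to the offending ball, bound the pairing below by the density surplus $(1+\lambda)\delta-1$ and above via the duality $(B^{\veps}_{1,1})^{*}=B^{-\veps}_{\infty,\infty}$ together with the scaling bound $\norm{\phi}_{\B{\veps}{1}{1}}\lesssim r^{3-\veps}$. The only (cosmetic) difference is that you shrink the cutoff's plateau inward to $B(x_0,(1-\eta)r)$ while the paper expands its support outward to $B(x_0,(1+\eta)r)$; the shell-loss algebra and the choice of $\eta$ forced by $\delta>\frac{1}{1+\lambda}$ are the same in both versions.
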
 

\begin{proof}

Arguing by contradiction, assume that there exists $i \in\{1,2,3\}$ such that either 
$A^{i,+}_\lambda$ or $A^{i,-}_\lambda$ is not $r$-semi-mixed with the ratio $\delta$.
Without loss of generality, assume that it is $A^{i,+}_\lambda$ (if it were $A^{i,-}_\lambda$, the
only modification to the proof would be replacing the function $f$ below with $-f$).

\medskip

Then,
there exists $x_0\in \nR^3$ such that
$$ \frac{m(A^{i,+}_\lambda \cap B(x_0,r))}{m(B(x_0,r))} > \delta;$$ equivalently,
$$m(A^{i,+}_\lambda \cap B(x_0,r)) > \delta \, \Pi(3) \, r^3, $$
where $\Pi(3)$ denotes the volume of the unit ball in $\nR^3$. 

\medskip

Let $f \in \B{\veps}{1}{1}$ be a smooth radial cut-off equal to 1 in $B(x_0,r)$, and vanishing
outside $B(x_0, (1+\eta)r)$ for some $\eta>0$ (the value to be determined at the end of the proof). Then, 
\begin{equation}\label{norm_u}
\norm{u}_{\B{-\veps}{\infty}{\infty}} \geq \frac{c}{\norm{f}_{\B{\veps}{1}{1}}} \abs{\int_{\nR^3} u_i(x)f(x)\, dx}
\end{equation}
for a positive constant $c$.

\medskip

An explicit calculation of the $\B{\veps}{1}{1}$-norm of $f$ via the finite differences
(using the finite differences of order two for the endpoint case $\veps = 1$; c.f. chapter II in \cite{BCD11}) yields

\begin{equation}\label{norm_f}
\norm{f}_{\B{\veps}{1}{1}}  \leq c(\eta) \, r^{3-\veps}
\end{equation}
for some $c(\eta)>0$.

\medskip

Next, write
\begin{align*}
\abs{\int_{\nR^3} u_i(x)f(x)\, dx} \geq \int_{\nR^3} u_i(x)f(x)\, dx
\geq I - \abs{II} - \abs{III}, 
\end{align*}
where $$I = \int_{A^{i,+}_\lambda \cap B(x_0,r)} u_i(x)f(x)\, dx,$$ $$II = \int_{B(x_0,r)\backslash A^{i,+}_\lambda}u_i(x)f(x)\, dx,$$ and $$III = \int_{(B(x_0,(1+\eta)r)\backslash B(x_0,r))} u_i(x)f(x)\, dx.$$ 

\medskip

It is plain that 
\begin{align}\label{I}  
I &= \int_{A^{i,+}_\lambda \cap B(x_0,r)} u_i(x)\,dx =  \notag \int_{A^{i,+}_\lambda \cap B(x_0,r)} u_i^+(x)\,dx
\\ &  > \lambda \norm{u}_{\Lp{\infty}}m(A^{i,+}_\lambda \cap B(x_0, r)) \geq  \lambda \, \delta \, \Pi(3) \, r^3 \norm{u}_{\Lp{\infty}},
\end{align}
\begin{align}\label{II}
\abs{II} & = \abs{\int_{B(x_0,r) \backslash A^{i,+}_\lambda}u_i(x)\, dx} \leq \norm{u}_{\Lp{\infty}} \left( m(B(x_0,r) - m(A^{i,+}_\lambda \cap B(x_0,r))\right)\notag \\
& \leq \norm{u}_{\Lp{\infty}} \left(\Pi(3) r^3 - \delta \Pi(3) r^3\right) \notag \\
& = \left(1 - \delta \right)\Pi(3) \, r^3 \norm{u}_{\Lp{\infty}}, 
\end{align}
and 
\begin{align}\label{III}
\abs{III} & \leq \abs{\int_{(B(x_0,(1+\eta)r)\backslash B(x_0,r))} u_i(x)\, dx}\notag \\
& \leq \norm{u}_{\Lp{\infty}} \left( m(B(x_0,(1+\eta)r) - m(B(x_0,r))\right)\notag \\
& \leq \left((1+\eta)^3 - 1\right)\Pi(3)r^3 \norm{u}_{\Lp{\infty}}. 
\end{align}

\medskip

It follows from \eqref{norm_u}, \eqref{norm_f} and \eqref{I}--\eqref{III} that 
\begin{align*}
\norm{u}_{\B{-\veps}{\infty}{\infty}} > c^*(\eta) \, \Pi(3) \, r^{\veps} \norm{u}_{\Lp{\infty}} 
(\lambda \delta + \delta  - (1+\eta)^3). 
\end{align*}

\medskip

Since $\delta > \frac{1}{1+\lambda}$, we can define $\eta=\eta(\lambda, \delta)$ to be the solution of the equation
$(1+\eta)^3 = \frac{\delta (1+\lambda) +1}{2}$; this in turn yields 
 
\begin{align}
\norm{u}_{\B{-\veps}{\infty}{\infty}} > c(\lambda, \delta)\Pi(3)r^{\veps} \norm{u}_{\Lp{\infty}}
\end{align}
with $c(\lambda, \delta) = c^*(\eta) \frac{\delta (1+\lambda) -1}{2}$, which is positive since
$\delta > \frac{1}{1+\lambda}$. This contradicts the statement in the lemma. 
\end{proof}

\medskip 

It has already been observed--in the context of the Sobolev $H^{-k}$-spaces--that the converse of 
this type of result is not necessarily true (see  \cite{IKX14}). Here, we present a simple
counterexample to the converse of the above lemma in the case $\epsilon=1$.
The function $f$ will be a 
`dome with a lightning rod' constructed as follows: let $g=g(r)$ be a function on $[0, \infty)$
obtained by smoothing out the edges of the polygonal line connecting the points
$(0, 2), (1/n, 1), (1, 1), (2,0)$ and $(\infty, 0)$, and set $f(x)=g(|x|)$. (This can be done with
the optimal bounds on the slopes of the secant lines to the graph of $g$ analogous to the 
optimal bounds on the slopes of the tangent lines/derivatives when constructing standard
cut-off functions.) On one hand, a simple geometric argument implies that
the full vectorial super level set $\{|f| > \frac{3}{4} \|f\|_\infty\}$ is $\frac{2}{n}$-mixed
with the ratio $\delta = \frac{1}{8}$. On the other hand, 
the inequality
\[
\|f\|_{ B^{-1}_{\infty, \infty} } \leq  c(\lambda, \delta) \, r \,\|f\|_\infty
\]
is doomed.
More precisely, since the scale $r$ of interest is now $r=\frac{2}{n}$, it is plain that
the term $r \,\|f\|_\infty$ is equal to $\frac{4}{n}$, while the computation of 
$\|f\|_{ B^{-1}_{\infty, \infty} }$ via `duality' as in (\ref{norm_u}) (testing $f$ against
itself) yields at least
$O(1)$; namely, both the $L^2$-norm and the $B^1_{1, 1}$-norm
of $f$ are $O(1)$, the latter following from a calculation via the finite differences.

\medskip

\begin{remark}\label{yay}
In the context of the study of the regularity theory of the 3D NSE, the above
example is interesting as it indicates that the assumption on local sparseness of
the super-level sets (Theorem \ref{escape_not}) is a weaker condition than
the smallness assumption in the Besov norm $B^{-1}_{\infty, \infty}$ (c.f. the proof of the 
main result below).
\end{remark}

\section{Proof of the theorem and some thoughts on non-smallness}

\noindent In this section, we present a short proof of the main result, and a discussion
about a blow-up scenario in which the smallness condition is not needed.

\begin{proof}

Let $t$ be an escape time in $(T-\epsilon, T)$, and $s(t)$ a time in the interval
$\Bigl[t+\frac{1}{4C(M)^2 \|u(t)\|_\infty^2}, t+\frac{1}{C(M)^2 \|u(t)\|_\infty^2}\Bigr]
\subset (T-\epsilon, T)$. Consider any of the component super-level sets

\[
 A^{i, \pm}_s=\{x \in \mathbb{R}^3: \, u_i^\pm(x,s(t)) > \frac{1}{2}\|u(s(t))\|_\infty\}.
\]

By Lemma \ref{mixing_lemma}, setting $\epsilon$ to be $1$, $\lambda= \frac{1}{2}$, 
and $\delta=\frac{3}{4}$,
there exists a constant $c_*>0$ such that the condition
\[
 \norm{u(s(t))}_{\B{-1}{\infty}{\infty}} \leq c_* \, r \norm{u(s(t))}_{\Lp{\infty}}
\]
implies that the super-level set $A^{i, \pm}_s$ is $r$-semi-mixed; in particular, this holds for
$r=\frac{1}{2 C(M) \widetilde{C}(M) \|u(s(t))\|_\infty}$. Consequently, the condition
\[
  \norm{u(s(t))}_{\B{-1}{\infty}{\infty}} \leq \frac{c_*}{2 C(M) \widetilde{C}(M)}
\]
implies that $A^{i, \pm}_s$ is $\frac{1}{2 C(M) \widetilde{C}(M) \|u(s(t))\|_\infty}$-semi-mixed with the ratio
$\delta=\frac{3}{4}$, which in turn,
implies that $A^{i, \pm}_s$ is 1D $(\frac{3}{4})^\frac{1}{3}$-sparse at some scale $\rho$, 
$0 < \rho \le \frac{1}{2 C(M) \widetilde{C}(M) \|u(s(t))\|_\infty}$, for any $x_0$ in $\mathbb{R}^3$.
Setting $m_0 = \frac{c_*}{2 C(M) \widetilde{C}(M)}$, all the conditions in Theorem \ref{escape_not}
are satisfied, and $T$ is not a blow-up time.
\end{proof}

\begin{remark}
Note that in the statement of the theorem, the smallness condition is imposed over some
interval $(T-\epsilon, T)$; however, the proof reveals that the condition is needed only
at a \emph{single time} $s(t)$.
\end{remark}

\medskip

At the end of this note, we would like to offer a possible scenario in which the smallness condition
is not needed. Arguing the same way as in the proof of the theorem--but utilizing 
Lemma \ref{mixing_lemma}
with an $\epsilon$ in $(0,1)$ instead--we see that the condition assuring the application
of Theorem \ref{escape_not} can be formulated as

\medskip

\[
 \norm{u(s(t))}_{\B{-\epsilon}{\infty}{\infty}} \leq  \frac{c_*}{2 C(M) \widetilde{C}(M)}  
 \norm{u(s(t))}_{\Lp{\infty}}^{1-\epsilon}.
\]
Since the optimization in $\epsilon$ will not lead to a qualitatively different result,
we set $\epsilon = \frac{1}{2}$, and rewrite the above as

\medskip

\[
 \norm{u(s(t))}_{\B{-\frac{1}{2}}{\infty}{\infty}} 
 \leq  \frac{c_*}{2 C(M) \widetilde{C}(M)}  
 \biggl(\frac{\norm{u(s(t))}_{\Lp{\infty}}}{ \norm{u(s(t))}_{\B{0}{\infty}{\infty}}}\biggr)^\frac{1}{2}
 \norm{u(s(t))}_{\B{0}{\infty}{\infty}}^\frac{1}{2}.
\]
Unraveling the characterization of the Besov norms in terms of the Littlewood-Paley
decomposition yields that it is sufficient to require 

\medskip

\[
  \norm{u(s(t))}_{\B{-1}{\infty}{\infty}}
  \leq  \frac{c_*^2}{4 C(M)^2 \widetilde{C}(M)^2}  
  \frac{\norm{u(s(t))}_{\Lp{\infty}}}{ \norm{u(s(t))}_{\B{0}{\infty}{\infty}}}.
\]
This provides an (admittedly narrow) escape route from the smallness; namely,
for certain \emph{classes} of functions, the ratio 

\[
 \frac{\|f\|_{\infty}}{\|f\|_{B^{0}_{\infty,\infty}}}
\]
can become \emph{arbitrarily large}. A typical example is given 
by the mollifications of the logarithm. More precisely, for 
$\epsilon>0$, set

\[
f_{\epsilon} = \rho_{\epsilon}* (\log^+(1/|x|)),
\]

\noindent where $\rho_{\epsilon}$ is the standard mollifier. Then, on one hand,
$\|f_{\epsilon}\|_{\infty}=O(\log(1/\epsilon))$, while on the other hand,
$\|f_{\epsilon}\|_{B^{0}_{\infty,\infty}}=O(1)$,
as $\epsilon\to 0$. The first asymptotics is transparent, and to see that
the second one is at most $O(1)$ (which is what we need), recall that
the inclusion of $BMO$ in $B^0_{\infty, \infty}$ is continuous, and 
observe that 
$\|f_{\epsilon}\|_{BMO}=O(1)$; this follows from the scaling properties
of the convolution and the $BMO$-norm similarly to the standard
argument that $\|\log |x|\|_{BMO}$ is finite (cf. \cite{St93}, the first section
in chapter IV).
Consequently, the ratio

\[
 \frac{\|f_{\epsilon}\|_{\infty}}{\|f_{\epsilon}\|_{B^{0}_{\infty,\infty}}}
\]

\noindent can indeed become arbitrarily large. 

\medskip

Of course, for this to be relevant,
the question becomes whether it is
realistic to expect that the local spatial structure of the flow around a 
possible singular point $x^*$, at a time near a possible
blow-up time $T$, can exhibit a log-like profile. If we needed the condition to hold
for a sequence of times converging to $T$, the answer
would be negative, as the spatial profile at $(x^*, T)$
has to be (essentially) at least as singular as $\frac{1}{|x-x^*|}$
(see, e.g., \cite{Ko98} where it was shown that a spatial singularity
to the 3D NSE that is $o\biggl(\frac{1}{|x-x^*|}\biggr)$ is, in fact, removable).
However,
since it suffices that the condition holds at a single time $s(t)$ (out of
continuum many available), one could envision a scenario in which
there is a log-like transition to the algebraic singularity that would
include the spatial profile at $s(t)$.

%-----------------------------------------------------------------------
\section*{Acknowledgements}
%-----------------------------------------------------------------------

\noindent The work of A.F. is supported in part by the \emph{National Science Foundation} grant DMS-1418911.
The work of Z.G. is supported in part by
the \emph{Research Council of Norway} grant
213474/F20 and the \emph{National Science Foundation} grant
DMS 1515805. The authors would like to thank the referee for the constructive criticism.

%---------------------------------------------------------------------
\end{document}